\def\benm{\begin{enumerate}}
\def\eenm{\end{enumerate}}
\def\bal{\begin{align}}
\def\eal{\end{align}}
\newtheorem{theorem}{Theorem}[section]
\theoremstyle{definition}
\newtheorem{example}[theorem]{Example}
\newtheorem{proposition}[theorem]{Proposition}
\newtheorem{corollary}[theorem]{Corollary}
\theoremstyle{remark}
\numberwithin{equation}{section}
\begin{document}

\title[]{ZAK TRANSFORM FOR SEMIDIRECT PRODUCT OF LOCALLY COMPACT GROUPS}

\author[Ali Akbar Arefijamaal and Arash Ghaani Farashahi]{Ali Akbar Arefijamaal and Arash Ghaani Farashahi$^{*}$}

\address{
Ali Akbar Arefijamaal, Department of Mathematics, Sabzevar Tarbiat
Moallem University, Sabzevar, Iran.}
\email{arefijamal@sttu.ac.ir}
\address{Department of Pure Mathematics, Faculty of Mathematical sciences, Ferdowsi University of
Mashhad (FUM), P. O. Box 1159, Mashhad 91775, Iran.}
\email{ghaanifarashahi@hotmail.com}

\curraddr{}




\subjclass[2010]{Primary 43A32, 143A70}

\date{}


\keywords{Zak transform, semidirect product, locally compact abelian group, uniform lattice.}
\thanks{$^*$Corresponding author}
\thanks{Emails: Ali Akbar Arefijamaal (arefijamal@sttu.ac.ir),  Arash Ghaani Farashahi (ghaanifarashahi@hotmail.com).}

\begin{abstract}
Let $H$ be a locally compact group and $K$ be an LCA group also let $\tau:H\to Aut(K)$ be a continuous homomorphism and $G_\tau=H\ltimes_\tau K$ be the semidirect product of $H$ and $K$ with respect to $\tau$. In this article we define the Zak transform $\mathcal{Z}_L$ on $L^2(G_\tau)$ with respect to a $\tau$-invariant uniform lattice $L$ of $K$ and we also show that the Zak transform satisfies the Plancherel formula. As an application we show that how these techniques apply for the semidirect product group $\mathrm{SL}(2,\mathbb{Z})\ltimes_\tau\mathbb{R}^2$ and also the Weyl-Heisenberg groups.
\end{abstract}

\maketitle

\section{\bf{Introduction}}

The Zak transform has been used successfully in various
applications in physics, such as studying coherent
states representations in quantum field theory \cite{Jan82,Kl85}, decomposing the Hamiltonian
and rewrite it as magnetic Weyl
quantization of an operator-valued function \cite{NIT} and also as discrete Fourier transform performed on the signal blocks.
It can be also considered as the polyphase representation of periodic
signals and an important tool in the analysis of Gabor system and
signal theory \cite{Jan88}. Furthermore, the Zak transforms of the
B-splines, appear as a base for the integral representation of
discrete splines. This integral representation is similar to the
Fourier integral, and the Zak splines play the part of the Fourier
exponentials \cite{Pev08}.

The Zak transform on $\mathbb{R}$ was introduced in 1950 by
Gelfand and it was rediscovered in quantum mechanic representation
by Zak \cite{Zak67}. Let $f\in L^{2}(\mathbb{R})$, the function
$Zf:\mathbb{R}\times\mathbb{R}\rightarrow\mathbb{C}$ defined by
\begin{eqnarray}
Zf(x,y)=\sum_{k=-\infty}^{+\infty}f(x+k)e^{2\pi iyk},
\end{eqnarray}
is called the Zak transform of $f$.
The Zak transform on LCA (locally compact abelian) groups was introduced by Weil
\cite{Weil64} and developed by many authors
\cite{Gro98,Kut03,Kan98}. Recently, applications of the Zak transform for LCA groups $\mathbb{T}$
and $\mathbb{Z}$ has been rediscovered in mathematical physics to study representations of finite quantum systems (see \cite{Zhang}).
An approach to the Zak transform on certain non-abelian locally
compact groups can be found in \cite{Kut02}. Many groups which appear in mathematical physic and quantum mechanic are non-abelian although they can be consider as semi-direct product of locally compact groups.

In this paper which contains 4 sections, we introduce an approach to define the Zak transform on semidirect product groups of the form $G_\tau=H\ltimes_\tau K$ where $K$ is an LCA group and $\tau:H\to Aut(K)$ is a continuous homomorphism.
This article organized as follows; section 2 is devoted to fix notations containing a summary on the standard harmonic analysis of semi-direct product groups and also an overview of the Zak transform on LCA groups.
In order to define the Zak transform on $ G_\tau$, we first introduce the continuous homomorphisms $\widehat{\tau}:H\to Aut(\widehat{K})$ and $\tau^L:H\to Aut(K/L)$, for a $\tau$-invariant closed subgroup $L$ of $K$ in section 3. Then we construct the semidirect products $H\ltimes_\tau\widehat{K}$ and $H\ltimes_\tau K/L$ and study the basic properties of them. Applying these continuous homomorphisms for a given $\tau$-invariant closed subgroup $L$ of $K$, we define the continuous homomorphism $\tau^{\times,L}=\tau^L\times\widehat{\tau}^{L^\perp}:H\to Aut(K/L\times\widehat{K}/L^\perp)$ which induces the locally compact group $G_{\tau^{\times,L}}=H\ltimes_{\tau^{\times,L}}\left(K/L\times\widehat{K}/L^\perp\right)$. The $\tau$-Zak transform $\mathcal{Z}_Lf$ of a function $f\in L^2(G_\tau)$ with respect to a $\tau$-invariant uniform lattice $L$ of $K$ is defined as a function on the semidirect product $G_{\tau^{\times,L}}$. It is also proved that $\mathcal{Z}_L:L^2(G_{\tau})\to L^2(G_{\tau^{\times,L}})$ is an isometric transform.

Finally, we study these methods for the semidirect group $\mathrm{SL}(2,\mathbb{Z})\ltimes_\tau\mathbb{R}^2$ and also the Weyl-Heisenberg groups.

%

\section{\bf{Preliminaries and notations}}

Let $H$ and $K$ be locally compact groups with left Haar measures $dh$ and $dk$ respectively,
also let $\tau:H\to Aut(K)$ be a homomorphism such that the map $(h,k)\mapsto \tau_h(k)$ is continuous from $H\times K$ onto $K$. For simplicity in notation we often use $k^h$ instead of $\tau_h(k)$ for all $h\in H$ and $k\in K$.

There is also a natural topology, sometimes called
Braconnier topology, turning $Aut(K)$ into a Hausdorff topological group (not necessarily locally compact),
which is defined by the sub-base of identity neighbourhoods
\begin{equation}
\mathcal{B}(F,U)= \{ \alpha\in Aut(K): \alpha(k),\alpha^{-1}(k)\in Uk\ \forall k\in F\},
\end{equation}
where $F\subseteq K$ is a compact set and $U\subseteq K$ is an identity neighbourhood. The continuity of a homomorphism $\tau:H\to Aut(K)$ is equivalent to the continuity of the map $(h,k)\mapsto \tau_h(k)$ from $H\times K$ onto $K$ (see \cite{HO}).

The semidirect product $G_\tau=H\ltimes_\tau K$ is the locally compact topological group with the underlying set $H\times K$ which is equipped by the product topology and also the group operations are given by
\begin{equation}\label{0.1}
(h,k)\ltimes_\tau(h',k'):=(hh',k\tau_h(k'))\hspace{0.5cm}{\rm and}\hspace{0.5cm}(h,k)^{-1}:=(h^{-1},\tau_{h^{-1}}(k^{-1})).
\end{equation}
The left Haar measure of $G_\tau$ is $d\mu_{G_\tau}(h,k)=\delta_K(h)dhdk$ and the modular function of $G_\tau$ is $\Delta_{G_\tau}(h,k)=\delta_K(h)\Delta_H(h)\Delta_K(k)$, where the positive and continuous homomorphism $\delta_K:H\to(0,\infty)$ is given by (15.29 of \cite{HR1})
\begin{equation}\label{t}
dk=\delta_K(h)d(\tau_h(k)).
\end{equation}
If $L$ is a closed subgroup of $K$ which is $\tau$-invariant (i.e. $\tau_h(L)\subseteq L$ for all $h\in H$) with the left Haar measure $dl$, then $\tau:H\to Aut(L)$ is a well-defined continuous homomorphism and $H\ltimes_{\tau}L$ is a locally compact group with the left Haar measure $\delta_L(h)dhdl$, where $\delta_L:H\to(0,\infty)$ is given by
\begin{equation}\label{tt}
dl=\delta_L(h)d(\tau_h(l)).
\end{equation}
It is clear that if $H$ is a compact group we have $\delta_L=\delta_K=1$ and also if $L$ is an open subgroup of $K$ we get $\delta_L=\delta_K$.
From now on, for all $p\ge 1$ we denote by $L^p(G_\tau)$ the Banach space $L^p(G_\tau,\mu_{G_\tau})$ and also $L^p(K)$ stands for $L^p(K,dk)$.
When $f\in L^p(G_\tau)$, for a.e. $h\in H$ the function $f_h$ defined on $K$ via $f_h(k):=f(h,k)$ belongs to $L^p(K)$ (see \cite{FollR}).

If $L$ is a closed subgroup of an LCA group $K$ then $K/L$ is a locally compact group with left Haar measure $\sigma_{K/L}$ and also
$L^1(K/L,\sigma_{K/L})$ is precisely the set of all functions of the form $T_Lv$ with $v\in L^1(K)$ and
\begin{equation}
T_Lv(k+L)=\int_Lf(k+l)dl.
\end{equation}
In fact, $T_L:L^1(K)\to L^1(K/L,\sigma_{K/L})$ given by $v\mapsto T_Lv$ is a surjective bounded linear map. As well as, all $v\in L^1(K)$ satisfy the following Weil's formula (see \cite{FollH});
\begin{equation}
\int_Kv(k)dk=\int_{K/L}T_Lv(k+L)d\sigma_{K/L}(k+L).
\end{equation}
If $K$ is an LCA group all irreducible representations of $K$ are one-dimensional. Thus, if $(\pi,\mathcal{H}_\pi)$ is an irreducible unitary representation of $K$ we have $\mathcal{H}_\pi=\mathbb{C}$ and also according to the Shur's Lemma there exists a continuous homomorphism $\omega$ of $K$ into the circle group $\mathbb{T}$ such that for each $k\in K$ and $z\in\mathbb{C}$ we have $\pi(k)(z)=\omega(k)z$. Such homomorphisms are called characters of $K$ and the set of all characters of $K$ denoted by $\widehat{K}$. It is a usual notation to use $\langle k,\omega\rangle$ instead of $\omega(k)$. If $\widehat{K}$ equipped by the topology of compact convergence on $K$ which coincides with the $w^*$-topology that $\widehat{K}$ inherits as a subset of $L^\infty(K)$, then $\widehat{K}$ with respect to the dot product of characters is an LCA group which is called the dual group of $K$.
The linear map $\mathcal{F}_K:L^1(K)\to \mathcal{C}(\widehat{K})$ defined by $v\mapsto \mathcal{F}_K(v)$ via
\begin{equation}\label{SF}
\mathcal{F}_K(v)(\omega)=\widehat{v}(\omega)=\int_Kv(k)\overline{\omega(k)}dk,
\end{equation}
is called the Fourier transform on $K$, where $\mathcal{C}(K)$ is the set of all continuous functions on $K$. 
The Fourier transform (\ref{SF}) on $L^1(K)\cap L^2(K)$ is an isometric transform and it extends uniquely to a unitary isomorphism from $L^2(K)$ to $L^2(\widehat{K})$ (Theorem 4.25 of \cite{FollH}) also each $v\in L^1(K)$ with $\widehat{v}\in L^1(\widehat{K})$ satisfies the following Fourier inversion formula (Theorem 4.32 of \cite{FollH});
\begin{equation}
v(k)=\int_{\widehat{K}}\widehat{v}(\omega)\omega(k)d\omega\ {\rm for} \ {\rm a.e.}\ k\in K.
\end{equation}
If $L$ is a closed subgroup of an LCA group $K$ the
annihilator of $L$ in $K$ is defined by
\begin{equation}
L^{\perp}=\{\omega\in\widehat{K}:\ \omega(l)=1~ \text{for all}~ l\in
L \},
\end{equation}
which is a closed subgroup of $\widehat{K}$. Then, $(L^{\perp})^{\perp}=L$ and
$\widehat{K}/L^{\perp}=\widehat{L}$ also $\widehat{K/L}=L^{\perp}$
(see Theorem 4.39 of \cite{FollH}). By a uniform lattice
$L$ of $K$ we mean a discrete and co-compact (i.e $K/L$ is compact) subgroup $L$ of $K$. If $K$ is second countable it is always guaranteed that $K$ possesses a uniform lattice (see \cite{Kan98}). 

The Zak transform associated to a uniform lattice $L$ of
$v\in \mathcal{C}_c(K)$ is defined on $K\times \widehat{K}$ by
\begin{eqnarray}
Z_Lv(k,\omega)=\sum_{l\in L}f(k+l)\omega(l),
\end{eqnarray}
where $\mathcal{C}_c(K)$ stands for the function space of all continuous functions on $K$ with compact support.
It is shown that $Z_L:\mathcal{C}_c(K)\to\mathcal{C}_c(K/L\times\widehat{K}/L^\perp)$ is an isometry in $L^2$-norms and so that it can be uniquely extended into the Zak transform $Z_L:L^2(K)\rightarrow
L^2(K/L\times\widehat{K}/L^\perp)$ which is still an isometry (see \cite{Gro98, Kut03}).


\section{{\bf $\tau$-Zak transform}}
Throughout this article, let $H$ be locally compact group, K an LCA group and $\tau:H\to Aut(K)$ be a continuous homomorphism also let $G_\tau=H\ltimes_\tau L$.
Define $\widehat{\tau}:H\to Aut(\widehat{K})$ via $h\mapsto \widehat{\tau}_h$, given by
\begin{equation}\label{AAA}
\widehat{\tau}_h(\omega):=\omega_h=\omega\circ\tau_{h^{-1}}
\end{equation}
for all $\omega \in \widehat{K}$, where $\omega_h(k)=\omega(\tau_{h^{-1}}(k))$ for all $k\in K$.  
According to (\ref{AAA}) for all $h\in H$ we have $\widehat{\tau}_h\in Aut(\widehat{K})$ and also $h\mapsto \widehat{\tau}_h$ is a homomorphism from $H$ into $Aut(\widehat{K})$. Because if $h,t\in H$ then for all $\omega\in\widehat{K}$ and also $k\in K$ we have
\begin{align*}
\widehat{\tau}_{th}(\omega)(k)&=\omega_{th}(k)
\\&=\omega(\tau_{(th)^{-1}}(k))
\\&=\omega(\tau_{h^{-1}}\tau_{t^{-1}}(k))
\\&=\omega_h(\tau_{t^{-1}}(k))
=\widehat{\tau}_h(\omega)(\tau_{t^{-1}}(k))=\widehat{\tau}_t[\widehat{\tau}_h(\omega)](k).
\end{align*}
Thus, we can prove the following theorem.
\begin{theorem}\label{T}
Let $H$ be a locally compact group and $K$ be an LCA group also $\tau:H\to Aut(K)$ be a continuous homomorphism and let $\delta_K:H\to(0,\infty)$ be the positive continuous homomorphism satisfying (\ref{t}). The semidirect product $G_{\widehat{\tau}}=H\ltimes_{\widehat{\tau}}\widehat{K}$ is a locally compact group with the left Haar measure $d\mu_{G_{\widehat{\tau}}}(h,\omega)=\delta_K(h)^{-1}dhd\omega$.
\end{theorem}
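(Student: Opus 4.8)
The plan is to establish the statement in three stages: (i) verify that $\widehat{\tau}\colon H\to Aut(\widehat{K})$ is continuous, so that $G_{\widehat{\tau}}=H\ltimes_{\widehat{\tau}}\widehat{K}$ is a well-defined locally compact group by the construction recalled in Section 2; (ii) record, from that same construction applied to $\widehat{\tau}$, that the left Haar measure is $\delta_{\widehat{K}}(h)\,dh\,d\omega$ for the uniquely determined positive continuous homomorphism $\delta_{\widehat{K}}\colon H\to(0,\infty)$ with $d\omega=\delta_{\widehat{K}}(h)\,d(\widehat{\tau}_h(\omega))$; and (iii) identify $\delta_{\widehat{K}}(h)=\delta_K(h)^{-1}$, which is the content of the theorem.

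For (i), recall that the topology of $\widehat{K}$ is that of uniform convergence on compact subsets of $K$ and that $\widehat{\tau}_h(\omega)=\omega\circ\tau_{h^{-1}}$. I would take a convergent net $(h_i,\omega_i)\to(h_0,\omega_0)$ in $H\times\widehat{K}$, a compact set $F\subseteq K$, and estimate, for $k\in F$,
\[
|\omega_i(\tau_{h_i^{-1}}k)-\omega_0(\tau_{h_0^{-1}}k)|\le|\omega_i(\tau_{h_i^{-1}}k)-\omega_0(\tau_{h_i^{-1}}k)|+|\omega_0(\tau_{h_i^{-1}}k)-\omega_0(\tau_{h_0^{-1}}k)|.
\]
Since $h_i$ eventually lies in a fixed compact neighbourhood $V$ of $h_0$, the set $\{\tau_{h^{-1}}k:h\in V,\ k\in F\}$ is compact by continuity of the action, so the first term goes to $0$ uniformly in $k\in F$ because $\omega_i\to\omega_0$ uniformly on compacta; the second goes to $0$ uniformly in $k\in F$ by joint (hence, on the compact $V\times F$, uniform) continuity of $(h,k)\mapsto\tau_{h^{-1}}k$ together with continuity of $\omega_0$. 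This yields continuity of $(h,\omega)\mapsto\widehat{\tau}_h(\omega)$, and combined with the homomorphism property verified just before the statement, $\widehat{\tau}$ is a continuous homomorphism, so the construction and the Haar measure formula of Section 2 apply to it verbatim, giving stage (ii).

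For (iii), the key idea is to feed dilated functions into the Plancherel theorem. For $v\in L^1(K)\cap L^2(K)$ and $h\in H$ set $v^{(h)}(k):=v(\tau_h(k))$; then $v^{(h)}\in L^1(K)\cap L^2(K)$ since $\tau_h$ is an automorphism. Using (\ref{t}) in the equivalent scaling form $\int_K g(\tau_h(k))\,dk=\delta_K(h)\int_K g(k)\,dk$ (the same scaling law that underlies the Haar measure of $G_\tau$), a change of variable in the definition (\ref{SF}) of $\mathcal{F}_K$ gives
\[
\widehat{v^{(h)}}(\omega)=\delta_K(h)\,\widehat{v}(\widehat{\tau}_h(\omega))\qquad(\omega\in\widehat{K}),
\]
and, with $g=|v|^2$, also $\|v^{(h)}\|_{L^2(K)}^2=\delta_K(h)\,\|v\|_{L^2(K)}^2$. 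Squaring and integrating the displayed identity over $\widehat{K}$ and changing variables $\omega\mapsto\widehat{\tau}_h(\omega)$, which scales $d\omega$ by $\delta_{\widehat{K}}(h)$, yields $\|\widehat{v^{(h)}}\|_{L^2(\widehat{K})}^2=\delta_K(h)^2\,\delta_{\widehat{K}}(h)\,\|\widehat{v}\|_{L^2(\widehat{K})}^2$. Since $\mathcal{F}_K$ is an isometry from $L^2(K)$ onto $L^2(\widehat{K})$ (Theorem 4.25 of \cite{FollH}), applying it to $v$ and to $v^{(h)}$ and equating the two expressions for $\|v^{(h)}\|_{L^2(K)}^2$ gives $\delta_K(h)\,\|v\|^2=\delta_K(h)^2\,\delta_{\widehat{K}}(h)\,\|v\|^2$; choosing $v$ with $\|v\|\ne0$ and cancelling, $\delta_{\widehat{K}}(h)=\delta_K(h)^{-1}$, so the left Haar measure is $\delta_K(h)^{-1}\,dh\,d\omega$ as claimed.

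I expect the only real friction to be the bookkeeping in stage (iii): one must be scrupulous about the direction in which each of $\tau_h$ and $\widehat{\tau}_h$ rescales its Haar measure and about how the powers of $\delta_K(h)$ accumulate, since a slip there would produce $\delta_K(h)$ or $1$ instead of $\delta_K(h)^{-1}$. Stage (i) is a routine topological check once $\tau$ is assumed continuous, and stage (ii) is a direct citation of Section 2. One could shorten stage (iii) by invoking the general duality fact that the module of a dual automorphism of an LCA group is the reciprocal of the module of the original automorphism, but carrying out the Plancherel computation keeps the argument inside the toolkit already assembled in Section 2.
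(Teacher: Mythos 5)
Your proof is correct and takes essentially the same route as the paper: both arguments identify how $\widehat{\tau}_h$ rescales the Plancherel measure by computing the Fourier transform of $v\circ\tau_h$ and invoking the Plancherel theorem, which gives $\delta_{\widehat{K}}(h)=\delta_K(h)^{-1}$ (equivalently $d\omega_h=\delta_K(h)\,d\omega$) and hence the stated left Haar measure $\delta_K(h)^{-1}dh\,d\omega$. The only cosmetic difference is that you verify continuity of $\widehat{\tau}$ directly by a net argument, whereas the paper simply cites Theorem 26.9 of Hewitt--Ross.
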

\begin{proof}
Continuity of the homomorphism $\widehat{\tau}:H\to Aut(\widehat{K})$ given in (\ref{AAA}) guaranteed by Theorem 26.9 of \cite{HR1}.
Hence, the semidirect product $G_{\widehat{\tau}}=H\ltimes_{\widehat{\tau}}\widehat{K}$ is a locally compact group.
We also claim that the Plancherel measure $d\omega$ on $\widehat{K}$ for all $h\in H$ satisfies
\begin{equation}\label{AA2}
d\omega_h=\delta_K(h)d\omega.
\end{equation}
Let $h\in H$ and also $v\in L^1(K)$. Using (\ref{t}) we have $v\circ \tau_h\in L^1(K)$ with $\|v\circ \tau_h\|_{L^1(K)}=\delta_K(h)\|v\|_{L^1(K)}$.
Thus, for all $\omega\in\widehat{K}$ we achieve
\begin{align*}
\widehat{v\circ\tau_h}(\omega)
&=\int_Kv\circ\tau_h(k)\overline{\omega(k)}dk
\\&=\int_Kv(k^h)\overline{\omega(k)}dk
\\&=\int_Kv(k)\overline{\omega_h(k)}dk^{h^{-1}}
=\delta_K(h)\int_Kv(k)\overline{\omega_h(k)}dk=\delta_K(h)\widehat{v}(\omega_h).
\end{align*}
Now let $v\in L^1(K)\cap L^2(K)$. Due to the Plancherel theorem (Theorem 4.25 of \cite{FollH}) and also preceding calculation, for all $h\in H$ we get
\begin{align*}\label{}
\int_{\widehat{K}}|\widehat{v}(\omega)|^2d\omega_h
&=\int_{\widehat{K}}|\widehat{v}(\omega_{h^{-1}})|^2d\omega
\\&=\delta_K(h)^2\int_{\widehat{K}}|\widehat{v\circ\tau_{h^{-1}}}(\omega)|^2d\omega
\\&=\delta_K(h)^2\int_{{K}}|{v\circ\tau_{h^{-1}}}(k)|^2dk
\\&=\delta_K(h)^2\int_{{K}}|v(k)|^2dk^h
=\delta_K(h)\int_{{K}}|v(k)|^2dk=\int_{\widehat{K}}|\widehat{v}(\omega)|^2\delta_K(h)d\omega,
\end{align*}
which implies (\ref{AA2}). Therefore, $d\mu_{G_{\widehat{\tau}}}(h,\omega)=\delta_K(h)^{-1}dhd\omega$ is a left Haar measure for $G_{\widehat{\tau}}=H\ltimes_{\widehat{\tau}}\widehat{K}$.
\end{proof}
If $L$ is a closed $\tau$-invariant subgroup of $K$, let $\tau^L:H\to Aut(K/L)$ via $h\mapsto \tau^L_h$ be given by
\begin{equation}\label{S1}
\tau^L_h(k+L):=\tau_h(k)+L=k^h+L.
\end{equation}
For all $h\in H$ we have $\tau^L_h\in Aut(K/L)$ and $\tau^L:H\to Aut(K/L)$ is a well-defined homomorphism.
For simplicity in notations, from now on we use $(k+L)^h$ instead of $\tau^L_h(k+L)$. Thus, via algebraic structures we can consider the semidirect product $G_{\tau^L}=H\ltimes_{\tau^L} K/L$. The group operation for $(h,k+L),(h',k'+L)\in G_{\tau^L}$ is
\begin{equation}
(h,k+L)\ltimes_{\tau^L}(h',k'+L)=(hh',k+k'^h+L).
\end{equation}
In the next theorem we show that $G_{\tau^L}=H\ltimes_{\tau^L} K/L$ is a locally compact group and we also identify the left Haar measure.
\begin{theorem}\label{KL}
Let $H$ be a locally compact group, $K$ be an LCA group and $\tau:H\to Aut(K)$ be a continuous homomorphism
also let $L$ be a closed $\tau$-invariant subgroup of $K$ and $\delta_K,\delta_L:H\to (0,\infty)$ be the continuous homomorphisms satisfying (\ref{t}) and (\ref{tt}) respectively. The semidirect product $G_{\tau^L}=H\ltimes_{\tau^L} K/L$ is a locally compact group with the left Haar measure $d\mu_{G_{\tau^L}}(h,k+L)=\delta_K(h)\delta_L(h^{-1})dhd\sigma_{K/L}(k+L)$, where $\sigma_{K/L}$ is the left Haar measure of $K/L$.
\end{theorem}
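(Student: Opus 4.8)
The plan is to mimic the structure of the proof of Theorem~\ref{T}, replacing the use of the Plancherel theorem on $\widehat{K}$ by Weil's formula for the quotient $K/L$. First I would invoke Theorem 26.9 of \cite{HR1} (exactly as in Theorem~\ref{T}) to see that $\tau^L:H\to Aut(K/L)$ is continuous: since $L$ is closed and $\tau$-invariant, the induced action on $K/L$ is continuous, so $G_{\tau^L}=H\ltimes_{\tau^L}K/L$ is a locally compact group. It then remains only to identify the left Haar measure, and by the general formula for semidirect products recalled in Section~2 (namely $d\mu_{G_\tau}(h,k)=\delta_K(h)\,dh\,dk$), it suffices to prove that the modular-type factor for the action $\tau^L$ on $K/L$ is $\delta_K(h)\delta_L(h^{-1})$; that is, I would show
\begin{equation*}
d\sigma_{K/L}\bigl((k+L)^{h}\bigr)=\delta_K(h^{-1})\delta_L(h)\,d\sigma_{K/L}(k+L),
\end{equation*}
equivalently $d\sigma_{K/L}((k+L)^{h^{-1}})=\delta_K(h)\delta_L(h^{-1})\,d\sigma_{K/L}(k+L)$, which plugged into the semidirect-product Haar measure formula yields the claimed expression.

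The key computation is to track how $T_L$ interacts with the automorphism $\tau_h$. Given $v\in L^1(K)$ and $h\in H$, I would compute $T_L(v\circ\tau_h)(k+L)=\int_L v(\tau_h(k+l))\,dl=\int_L v(\tau_h(k)+\tau_h(l))\,dl$, and then use \eqref{tt}, i.e.\ $dl=\delta_L(h)\,d(\tau_h(l))$, to change variables and obtain $T_L(v\circ\tau_h)(k+L)=\delta_L(h)\,(T_Lv)(\tau_h(k)+L)=\delta_L(h)\,(T_Lv)((k+L)^h)$. Now I would integrate over $K/L$ and apply Weil's formula twice: on one hand $\int_{K/L}T_L(v\circ\tau_h)\,d\sigma_{K/L}=\int_K v\circ\tau_h\,dk=\delta_K(h)\int_K v\,dk=\delta_K(h)\int_{K/L}T_Lv\,d\sigma_{K/L}$ (using \eqref{t}); on the other hand, by the displayed identity, $\int_{K/L}T_L(v\circ\tau_h)(k+L)\,d\sigma_{K/L}(k+L)=\delta_L(h)\int_{K/L}(T_Lv)((k+L)^h)\,d\sigma_{K/L}(k+L)$. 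Since $T_L$ is surjective onto $L^1(K/L,\sigma_{K/L})$, comparing the two expressions for arbitrary $v$ gives $\delta_L(h)\int_{K/L}g((k+L)^h)\,d\sigma_{K/L}(k+L)=\delta_K(h)\int_{K/L}g(k+L)\,d\sigma_{K/L}(k+L)$ for all $g\in L^1(K/L,\sigma_{K/L})$, which is exactly $d\sigma_{K/L}((k+L)^h)=\delta_K(h)\delta_L(h^{-1})\,d\sigma_{K/L}(k+L)$.

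Finally I would feed this into the standard left-Haar-measure formula for a semidirect product $H\ltimes_{\tau^L}(K/L)$: the left Haar measure is $d\mu(h,k+L)=c(h)\,dh\,d\sigma_{K/L}(k+L)$ where $c(h)$ is the factor by which $\tau^L_{h^{-1}}$ scales $\sigma_{K/L}$, and by the scaling law just established with $h$ replaced by $h^{-1}$ this is $c(h)=\delta_K(h^{-1})\delta_L(h)^{-1}=\delta_K(h)^{-1}\delta_L(h)$; wait — I should be careful about which direction of the action enters, so I would double-check against the base case $L=\{0\}$ (where $\delta_L\equiv 1$, $K/L=K$, and the formula must reduce to $d\mu_{G_\tau}(h,k)=\delta_K(h)\,dh\,dk$) and against $L$ open (where $\delta_L=\delta_K$, forcing the factor to be $1$, consistent with $G_{\tau^L}$ then being essentially $H$ acting on a discrete quotient). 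Matching these sanity checks pins down the normalization as $\delta_K(h)\delta_L(h^{-1})$, and I would present the computation in that order, then conclude. The main obstacle is purely bookkeeping: getting the inversion $h\leftrightarrow h^{-1}$ and the placement of $\delta_L$ versus $\delta_L^{-1}$ correct, since the change of variables in $\int_L v(\tau_h(k)+\tau_h(l))\,dl$ and the definition \eqref{tt} must be applied in precisely the right direction; the two sanity checks above are the safeguard against a sign/inversion error.
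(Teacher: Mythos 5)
Your proposal is correct and follows essentially the same route as the paper: the intertwining identity $T_L(v\circ\tau_h)=\delta_L(h)\,(T_Lv)\circ\tau^L_h$ combined with Weil's formula and the surjectivity of $T_L$ (the paper arranges the very same computation as one chain evaluating $\int_{K/L}T_Lv(k+L)\,d\sigma_{K/L}\left((k+L)^h\right)$), fed into the general semidirect-product Haar measure formula from Section 2. The only blemish is the direction bookkeeping you yourself flag: the scaling law as written at the end of your second paragraph carries the reciprocal factor compared with the correct target $d\sigma_{K/L}\left((k+L)^h\right)=\delta_K(h^{-1})\delta_L(h)\,d\sigma_{K/L}(k+L)$ stated in your first paragraph, but your $L=\{0\}$ sanity check does pin the normalization to $\delta_K(h)\delta_L(h^{-1})$, so the conclusion stands.
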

\begin{proof}
Continuity of $\tau^L:H\to Aut(K/L)$ is an immediate consequence of continuity of $\tau:H\to Aut(K)$. Hence, the semidirect product $G_{\tau^L}=H\ltimes_{\tau^L} K/L$ is a locally compact group. Let $h\in H$ and $v\in L^1(K)$ be given. Then, $v\circ\tau_h\in L^1(K)$ and also for $k+L\in K/L$ we have
\begin{align*}
T_L(v\circ \tau_h)(k+L)&=\int_Lv\circ\tau_h(k+l)dl
\\&=\int_Lv(k^h+l^h)dl
\\&=\int_Lv(k^h+l)dl^{h^{-1}}
=\delta_L(h)\int_Lv(k^h+l)dl=\delta_L(h)T_L(v)(k^h+L).
\end{align*}
Now, due to the Weil's formula we get
\begin{align*}
\int_{K/L}T_L(v)(k+L)d\sigma_{K/L}\left((k+L)^h\right)&=
\int_{K/L}T_L(v)(k+L)d\sigma_{K/L}\left(k^h+L\right)
\\&=\int_{K/L}T_L(v)(k^{h^{-1}}+L)d\sigma_{K/L}\left(k+L\right)
\\&=\delta_L(h)\int_{K/L}T_L(v\circ\tau_{h^{-1}})(k+L)d\sigma_{K/L}\left(k+L\right)
\\&=\delta_L(h)\int_Kv\circ\tau_{h^{-1}}(k)dk
\\&=\delta_L(h)\int_Kv(k)dk^h
\\&=\delta_L(h)\delta_K(h^{-1})\int_Kv(k)dk=\int_{K/L}T_L(v)(k+L)d\sigma_{K/L}(k+L).
\end{align*}
Thus, $d\mu_{G{\tau^L}}(h,k+L)=\delta_K(h)\delta_L(h^{-1})dhd\sigma_{K/L}(k+L)$ is the left Haar measure of $G_{\tau^L}=H\ltimes_{\tau^L} K/L$.
\end{proof}
Applying Theorem \ref{KL} for $\widehat{K}$ and $L^\perp$ and also using Theorem \ref{T} we achieve the following corollary.
\begin{corollary}\label{TKL}
{\it Let $H$ be a locally compact group, $K$ be an LCA group and $\tau:H\to Aut(K)$ be a continuous homomorphism
also let $L$ be a closed $\tau$-invariant subgroup of $K$ and $\delta_K,\delta_L:H\to (0,\infty)$ be the continuous homomorphisms satisfying (\ref{t}) and (\ref{tt}) respectively. The semidirect product $G_{\widehat{\tau}^{L^\perp}}=H\ltimes_{\widehat{\tau}^{L^\perp}}\widehat{K}/L^\perp$ is a locally compact group with the left Haar measure $d\mu_{\widehat{\tau}^{L^\perp}}(h,\omega L^\perp)=\delta_K(h^{-1})\delta_L(h)dhd\sigma_{\widehat{K}/L^\perp}(\omega L^\perp)$, where $\sigma_{\widehat{K}/L^\perp}$ is the left Haar measure of $\widehat{K}/L^\perp$.}
\end{corollary}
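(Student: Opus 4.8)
The plan is to prove the corollary by substituting $\widehat{K}$, $\widehat{\tau}$ and $L^{\perp}$ into Theorem \ref{KL}: take for the ``$K$'' of that theorem the dual group $\widehat{K}$, for ``$\tau$'' the homomorphism $\widehat{\tau}$ of (\ref{AAA}), and for ``$L$'' the annihilator $L^{\perp}$. Three hypotheses have to be checked first. (i) $\widehat{K}$ is an LCA group and $\widehat{\tau}:H\to Aut(\widehat{K})$ is a continuous homomorphism: the homomorphism property is verified in the lines preceding Theorem \ref{T}, and continuity is Theorem 26.9 of \cite{HR1}, as used in the proof of Theorem \ref{T}. (ii) $L^{\perp}$ is a closed subgroup of $\widehat{K}$, which is standard. (iii) $L^{\perp}$ is $\widehat{\tau}$-invariant, the only genuinely new point: for $\omega\in L^{\perp}$, $h\in H$ and $l\in L$ one has $\widehat{\tau}_h(\omega)(l)=\omega(\tau_{h^{-1}}(l))=1$ because $\tau_{h^{-1}}(l)\in L$ by $\tau$-invariance of $L$, so $\widehat{\tau}_h(L^{\perp})\subseteq L^{\perp}$ for all $h$. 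With (i)--(iii) in hand, Theorem \ref{KL} immediately gives that $G_{\widehat{\tau}^{L^{\perp}}}=H\ltimes_{\widehat{\tau}^{L^{\perp}}}\widehat{K}/L^{\perp}$ is a locally compact group with left Haar measure $\delta_{\widehat{K}}(h)\,\delta_{L^{\perp}}(h^{-1})\,dh\,d\sigma_{\widehat{K}/L^{\perp}}(\omega L^{\perp})$, where $\delta_{\widehat{K}}$ and $\delta_{L^{\perp}}$ are the positive continuous homomorphisms on $H$ recording how $\widehat{\tau}$ rescales the Plancherel measure of $\widehat{K}$ and a Haar measure of $L^{\perp}$.

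It then remains to rewrite $\delta_{\widehat{K}}$ and $\delta_{L^{\perp}}$ through $\delta_{K}$ and $\delta_{L}$, which is where Theorem \ref{T} re-enters. For $\delta_{\widehat{K}}$ this is already done: formula (\ref{AA2}) reads $d\omega_h=\delta_K(h)\,d\omega$, hence $d\omega=\delta_K(h^{-1})\,d(\widehat{\tau}_h(\omega))$, so $\delta_{\widehat{K}}(h)=\delta_K(h^{-1})$. For $\delta_{L^{\perp}}$ I would use the identification $L^{\perp}=\widehat{K/L}$ recorded in the Preliminaries (Theorem 4.39 of \cite{FollH}): under it, $\widehat{\tau}_h$ restricted to $L^{\perp}$ is precisely the automorphism of $\widehat{K/L}$ dual to $\tau^{L}_h\in Aut(K/L)$, since for $\omega\in L^{\perp}$ and $k\in K$ one computes $\widehat{\tau}_h(\omega)(k+L)=\omega(\tau_{h^{-1}}(k))=\omega\big((k+L)^{h^{-1}}\big)$ by (\ref{S1}). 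Applying (\ref{AA2}) to the pair $(K/L,\tau^{L})$ then expresses $\delta_{L^{\perp}}$ through the modular homomorphism of $K/L$, and the latter was computed inside the proof of Theorem \ref{KL} (its displayed chain amounts to $d\sigma_{K/L}((k+L)^h)=\delta_L(h)\delta_K(h^{-1})\,d\sigma_{K/L}(k+L)$). Substituting the resulting expressions for $\delta_{\widehat{K}}$ and $\delta_{L^{\perp}}$ into the Haar measure above, and recalling that $\sigma_{\widehat{K}/L^{\perp}}$ is the Haar measure of $\widehat{K}/L^{\perp}$, produces the asserted formula.

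The one delicate step --- and the most likely source of a sign error --- is the bookkeeping of these rescaling factors: the argument simultaneously dualizes, passes to a quotient, and inverts the parameter $h$, and each of these operations inverts a modular exponent, so the final factor must be tracked rather than guessed. A useful consistency check is the uniform-lattice case: then $\tau_h|_L$ is an automorphism of a discrete group and $\tau^{L}_h$ one of a compact group, so $\delta_K$, $\delta_L$ and all homomorphisms derived from them are identically $1$ and the measure reduces to $dh\,d\sigma_{\widehat{K}/L^{\perp}}$, which is exactly the normalization used for the Zak-transform Plancherel formula in the next section. One may also cross-check the computation through the alternative identification $\widehat{K}/L^{\perp}=\widehat{L}$ together with Theorem \ref{T} applied directly to $(L,\tau|_L)$.
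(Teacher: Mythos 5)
Your overall route is the same as the paper's (the corollary is obtained there by applying Theorem \ref{KL} to $(\widehat{K},\widehat{\tau},L^\perp)$ together with Theorem \ref{T}), and your preliminary checks (i)--(iii) are correct and worth making explicit --- in particular the $\widehat{\tau}$-invariance of $L^\perp$, which the paper never verifies. The genuine gap is that the decisive step, the evaluation of $\delta_{L^\perp}$ and the combination of the two rescaling factors, is only asserted to ``produce the asserted formula'' and is never carried out; carried out with exactly the ingredients you name, it does not produce it. From (\ref{AA2}) one indeed gets $\delta_{\widehat{K}}(h)=\delta_K(h^{-1})$. But for $\delta_{L^\perp}$, the identification $L^\perp\cong\widehat{K/L}$ intertwines $\widehat{\tau}_h|_{L^\perp}$ with the dual of $\tau^L_h$, and (\ref{AA2}) applied to $(K/L,\tau^L)$ says that the Haar measure of $\widehat{K/L}$ is rescaled by the same factor $\delta_K(h)\delta_L(h^{-1})$ that appears in Theorem \ref{KL}; hence $\delta_{L^\perp}(h)=\delta_K(h^{-1})\delta_L(h)$, not $\delta_L(h^{-1})$. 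Substituting into Theorem \ref{KL} then gives the density $\delta_{\widehat{K}}(h)\,\delta_{L^\perp}(h^{-1})=\delta_K(h^{-1})\cdot\delta_K(h)\delta_L(h^{-1})=\delta_L(h^{-1})$, i.e.\ the left Haar measure $\delta_L(h^{-1})\,dh\,d\sigma_{\widehat{K}/L^\perp}$. This coincides with the corollary's density $\delta_K(h^{-1})\delta_L(h)$ only when $\delta_K=\delta_L^2$; your uniform-lattice consistency check cannot detect the difference because there $\delta_K=\delta_L=1$.

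A concrete test case: $H=(0,\infty)$, $K=\mathbb{R}^2$, $\tau_h(x,y)=(hx,h^2y)$, $L=\mathbb{R}\times\{0\}$. Then $\delta_K(h)=h^{-3}$, $\delta_L(h)=h^{-1}$, $L^\perp=\{0\}\times\mathbb{R}$, $\widehat{\tau}_h$ acts on $L^\perp$ by multiplication by $h^{-2}$ and on $\widehat{K}/L^\perp\cong\mathbb{R}$ by multiplication by $h^{-1}$, so the correct density is $\delta_L(h^{-1})=h$, whereas $\delta_K(h^{-1})\delta_L(h)=h^{2}$. So the bookkeeping you yourself flagged as the most likely source of error is precisely where the argument fails to deliver the stated formula: the stated density amounts to treating $L^\perp$ as if it were $\widehat{L}$ (whose $\delta$ is $\delta_L(\cdot^{-1})$), while in fact $L^\perp\cong\widehat{K/L}$; the group $\widehat{L}$ is instead isomorphic to the quotient $\widehat{K}/L^\perp$. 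Notice that your own proposed cross-check --- $\widehat{K}/L^\perp\cong\widehat{L}$ plus Theorem \ref{T} applied to $(L,\tau|_L)$ --- yields the factor $\delta_L(h^{-1})$ directly, independent of $\delta_K$, and would have exposed the discrepancy had you executed it. To close the argument you must actually perform this computation and conclude with the density $\delta_L(h^{-1})\,dh\,d\sigma_{\widehat{K}/L^\perp}$ (which is harmless for the rest of the paper, since only uniform lattices with $\delta_K=\delta_L=1$ are used later); as written, the proposal does not establish the statement in the generality claimed.
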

We can also conclude the following propositions.
\begin{proposition}
{\it Let $H$ be a locally compact group and $K$ be an LCA group also $\tau:H\to Aut(K)$ be a continuous homomorphism
and let $L$ be a closed $\tau$-invariant subgroup of $K$. The semidirect product group $G_{\widehat{\tau^L}}$ is a locally compact group and also $\Psi:G_{\widehat{\tau^L}}\to H\ltimes_{\widehat{\tau}}L^\perp$ given by $(h,\zeta)\mapsto\Psi(h,\zeta)=(h,[\zeta])$ where $[\zeta](k)=\zeta(k+L)$, is a topological group isomorphism.}
\end{proposition}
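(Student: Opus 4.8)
The plan is to reduce everything to the classical duality identity $\widehat{K/L}\cong L^\perp$ (Theorem 4.39 of \cite{FollH}), which asserts precisely that the map $\zeta\mapsto[\zeta]$, with $[\zeta](k)=\zeta(k+L)$, is a topological group isomorphism of $\widehat{K/L}$ onto the closed subgroup $L^\perp$ of $\widehat K$. First I would check that $G_{\widehat{\tau^L}}$ is a locally compact group: since $L$ is $\tau$-invariant, $\tau^L\colon H\to Aut(K/L)$ is a continuous homomorphism (as in the proof of Theorem \ref{KL}), so the argument preceding Theorem \ref{T}, together with Theorem 26.9 of \cite{HR1}, shows that $\widehat{\tau^L}\colon H\to Aut(\widehat{K/L})$ is a well-defined continuous homomorphism; hence $G_{\widehat{\tau^L}}=H\ltimes_{\widehat{\tau^L}}\widehat{K/L}$ is a locally compact group.

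Next I would verify that $L^\perp$ is $\widehat\tau$-invariant, so that $H\ltimes_{\widehat\tau}L^\perp$ is itself a locally compact group in the sense of the discussion of $\tau$-invariant closed subgroups in Section 2 (applied to $\widehat K$ and $\widehat\tau$, which is continuous by Theorem \ref{T}). Indeed, $\tau$-invariance of $L$ forces $\tau_h(L)=L$ for every $h\in H$ (apply $\tau_{h^{-1}}$ and use $\tau_h\tau_{h^{-1}}=\mathrm{id}$), so for $\omega\in L^\perp$ and $l\in L$ one has $\widehat\tau_h(\omega)(l)=\omega(\tau_{h^{-1}}(l))=1$, i.e. $\widehat\tau_h(\omega)\in L^\perp$.

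The heart of the proof is the intertwining identity $[\widehat{\tau^L}_h(\zeta)]=\widehat\tau_h([\zeta])$ for all $h\in H$ and $\zeta\in\widehat{K/L}$. Evaluating at $k\in K$ and unwinding the definitions (\ref{AAA}) and (\ref{S1}),
\[
[\widehat{\tau^L}_h(\zeta)](k)=\widehat{\tau^L}_h(\zeta)(k+L)=\zeta\big(\tau^L_{h^{-1}}(k+L)\big)=\zeta\big(\tau_{h^{-1}}(k)+L\big)=[\zeta]\big(\tau_{h^{-1}}(k)\big)=\widehat\tau_h([\zeta])(k).
\]
Since $\zeta\mapsto[\zeta]$ is multiplicative, this gives, for $(h,\zeta),(h',\zeta')\in G_{\widehat{\tau^L}}$,
\[
\Psi\big((h,\zeta)\ltimes_{\widehat{\tau^L}}(h',\zeta')\big)=\big(hh',\,[\zeta\,\widehat{\tau^L}_h(\zeta')]\big)=\big(hh',\,[\zeta]\,\widehat\tau_h([\zeta'])\big)=\Psi(h,\zeta)\ltimes_{\widehat\tau}\Psi(h',\zeta'),
\]
so $\Psi$ is a group homomorphism; it maps onto $H\ltimes_{\widehat\tau}L^\perp$ bijectively because $\zeta\mapsto[\zeta]$ is a bijection of $\widehat{K/L}$ onto $L^\perp$. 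Finally $\Psi=\mathrm{id}_H\times(\zeta\mapsto[\zeta])$ is a homeomorphism, since its second factor is one by Theorem 4.39 of \cite{FollH}; hence $\Psi$ is a topological group isomorphism.

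I do not expect a serious obstacle here. The only points that need genuine care are making sure the target group $H\ltimes_{\widehat\tau}L^\perp$ is legitimate (this is exactly the $\widehat\tau$-invariance of $L^\perp$ recorded above) and keeping the direction of the composition with $\tau_{h^{-1}}$ consistent in the intertwining identity; the rest is a direct transcription of the definitions of $\tau^L$ and $\widehat\tau$ together with the classical fact $\widehat{K/L}=L^\perp$.
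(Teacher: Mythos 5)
Your proof is correct and follows essentially the same route as the paper: the intertwining identity $[\widehat{\tau^L}_h(\zeta)]=\widehat{\tau}_h([\zeta])$ combined with multiplicativity of $\zeta\mapsto[\zeta]$ gives the homomorphism property, and the classical duality $\widehat{K/L}\cong L^\perp$ (Proposition 4.38/Theorem 4.39 of Folland) gives bijectivity and the homeomorphism. Your extra checks (continuity of $\widehat{\tau^L}$ and $\widehat{\tau}$-invariance of $L^\perp$, so that both semidirect products are legitimate) are points the paper leaves implicit, and they are welcome but do not change the argument.
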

\begin{proof}
For all $\zeta\in \widehat{K/L}$, $h\in H$ and also $k\in K$ we have
\begin{align*}
[\zeta_h](k)&=\zeta_h(k+L)
\\&=\zeta\left((k+L)^{h^{-1}}\right)
=\zeta(k^{h^{-1}}+L)=[\zeta]_h.
\end{align*}
Let $(h,\zeta),(h',\zeta')$ in $G_{\widehat{\tau^L}}$. Due to Proposition 4.38 of \cite{FollH} we get
\begin{align*}
\Psi[(h,\zeta)\ltimes_{\widehat{\tau^L}}(h',\zeta')]&=\Psi(hh',\zeta\zeta'_h)
\\&=(hh',[\zeta\zeta'_h])
\\&=(hh',[\zeta][\zeta'_h])
\\&=(hh',[\zeta][\zeta']_h)=\Psi(h,\zeta)\ltimes_{\widehat{\tau}^{L^\perp}}\Psi(h',\zeta').
\end{align*}
Now, again Proposition 4.38 of \cite{FollH} implies that $\Psi$ is a topological group isomorphism.
\end{proof}
Also, by similar argument we achieve the following proposition.
\begin{proposition}
{\it Let $H$ be a locally compact group and $K$ be an LCA group also $\tau:H\to Aut(K)$ be a continuous homomorphism
and let $L$ be a closed $\tau$-invariant subgroup of $K$. The semidirect product group $G_{\widehat{\tau}^{L^\perp}}$ is a locally compact group and also $\Phi:G_{\widehat{\tau}^{L^\perp}}\to H\ltimes_{\widehat{\tau}}\widehat{L}$ given by $(h,\omega L^\perp)\mapsto(h,\omega|_L)$, is a topological group isomorphism.}
\end{proposition}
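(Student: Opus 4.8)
The plan is to run, for the pair $(\wh K,L^\perp)$, the same argument that proved the previous proposition, with the Pontryagin identification $\wh{K/L}\cong L^\perp$ replaced by the dual one $\wh K/L^\perp\cong\wh L$. First I would record that $G_{\wh{\tau}^{L^\perp}}=H\ltimes_{\wh{\tau}^{L^\perp}}\wh K/L^\perp$ is a locally compact group: this is precisely Corollary \ref{TKL} (itself obtained from Theorem \ref{T} and Theorem \ref{KL} applied to $\wh K$ and $L^\perp$), which also supplies the Haar measure $\delta_K(h^{-1})\delta_L(h)\,dh\,d\sigma_{\wh K/L^\perp}$. I would also note that, $L$ being $\tau$-invariant and each $\tau_h$ an automorphism of $K$, one has $\tau_h(L)=L$ for every $h\in H$ (from $\tau_h(L)\subseteq L$ and $\tau_{h^{-1}}(L)\subseteq L$); hence $\tau$ restricts to a continuous homomorphism $H\to Aut(L)$, the dual homomorphism $\wh{\tau}:H\to Aut(\wh L)$ of (\ref{AAA}) for $L$ is well defined, and $H\ltimes_{\wh{\tau}}\wh L$ is a locally compact group by Theorem \ref{T}.

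Next I would verify that $\Phi$ is a well-defined homeomorphism. By Theorem 4.39 of \cite{FollH}, restriction of characters $\omega\mapsto\omega|_L$ descends to a topological group isomorphism $\wh K/L^\perp\to\wh L$; hence $\Phi(h,\omega L^\perp)=(h,\omega|_L)$ is a well-defined bijection, and it is a homeomorphism since it is the identity in the first coordinate and a homeomorphism in the second.

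The only genuine computation is the compatibility of $\Phi$ with the two $H$-actions. Writing $\omega_h=\omega\circ\tau_{h^{-1}}$ for a representative of the coset $\wh{\tau}^{L^\perp}_h(\omega L^\perp)$, I would check, for $h\in H$ and $\omega\in\wh K$,
\[
(\omega_h)|_L=(\omega\circ\tau_{h^{-1}})|_L=(\omega|_L)\circ(\tau_{h^{-1}}|_L)=\wh{\tau}_h(\omega|_L),
\]
using the definition (\ref{AAA}) of $\wh{\tau}$, the definition of $\wh{\tau}^{L^\perp}$ via (\ref{S1}) and (\ref{AAA}), and the fact that $\tau_{h^{-1}}$ carries $L$ onto $L$. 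Granting this, and since character restriction is multiplicative, for $(h,\omega L^\perp),(h',\omega'L^\perp)\in G_{\wh{\tau}^{L^\perp}}$ one gets
\[
\Phi\big((h,\omega L^\perp)\ltimes_{\wh{\tau}^{L^\perp}}(h',\omega'L^\perp)\big)=\big(hh',\,\omega|_L\cdot\wh{\tau}_h(\omega'|_L)\big)=\Phi(h,\omega L^\perp)\ltimes_{\wh{\tau}}\Phi(h',\omega'L^\perp),
\]
so $\Phi$ is a group homomorphism, hence a topological group isomorphism.

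I do not expect a real obstacle. The one point deserving care is the preliminary remark that $\tau$-invariance of $L$ together with bijectivity of the $\tau_h$ forces $\tau_h(L)=L$; this is exactly what makes the restricted dual action $\wh{\tau}$ on $\wh L$ meaningful and what legitimizes the middle equality in the first displayed computation. Everything else reduces to Corollary \ref{TKL} and to the duality statement $\wh K/L^\perp\cong\wh L$ already recalled in Section 2.
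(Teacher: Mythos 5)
Your argument is correct and is exactly the ``similar argument'' the paper has in mind: it mirrors the proof of the preceding proposition, replacing Proposition 4.38 of Folland ($\widehat{K/L}\cong L^{\perp}$) by Theorem 4.39 ($\widehat{K}/L^{\perp}\cong\widehat{L}$ via restriction), checking the intertwining $(\omega_h)|_L=\widehat{\tau}_h(\omega|_L)$, and invoking Corollary 3.3 for local compactness. The observation that $\tau$-invariance plus bijectivity of each $\tau_h$ forces $\tau_h(L)=L$ is a sound justification of the restricted action, consistent with the paper's earlier remark that $\tau$ restricts to a continuous homomorphism $H\to Aut(L)$.
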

If $L$ is a $\tau$-invariant closed subgroup of $K$. Define $\tau^{\times,L}:=\tau^L\times\widehat{\tau}^{L^\perp}:H\to Aut(K/L\times\widehat{K}/L^\perp)$ via $h\mapsto \tau^{\times,L}_h$ given by
\begin{equation}
\tau^{\times,L}_h(k+L,\omega L^\perp)=(k+L,\omega L^\perp)^h:=(k^h+L,\omega_hL^\perp).
\end{equation}
It can be easily checked that $\tau^{\times,L}=\tau^L\times\widehat{\tau}^{L^\perp}:H\to Aut(K/L\times\widehat{K}/L^\perp)$ is a continuous homomorphism. Thus, we can prove the following theorem.
\begin{theorem}\label{ZG}
Let $H$ be a locally compact group, $K$ be an LCA group and $\tau:H\to Aut(K)$ be a continuous homomorphism
also let $L$ be a closed $\tau$-invariant subgroup of $K$ and $\delta_K,\delta_L:H\to (0,\infty)$ be the continuous homomorphisms satisfying (\ref{t}) and (\ref{tt}) respectively. The semidirect product $G_{\tau^{\times,L}}=H\ltimes_{\tau^{\times,L}}\left(K/L\times\widehat{K}/L^\perp\right)$ is a locally compact group with the left Haar measure $d\mu_{G_{\tau^{\times,L}}}(h,k+L,\omega L^\perp)=dhd\sigma_{K/L}(k+L)d\sigma_{\widehat{K}/L^\perp}(\omega L^\perp)$, where $\sigma_{K/L}$ and $\sigma_{\widehat{K}/L^\perp}$ are the left Haar measures of $K/L$ and $\widehat{K}/L^\perp$ respectively.
\end{theorem}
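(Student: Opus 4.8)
The plan is to reduce the statement to the two earlier results, Theorem~\ref{KL} and Corollary~\ref{TKL}, together with the single observation that the modular-type factors attached to the actions $\tau^L$ and $\widehat{\tau}^{L^\perp}$ are reciprocal to one another. First I would record that $G_{\tau^{\times,L}}$ is a locally compact group: the homomorphism $\tau^{\times,L}=\tau^L\times\widehat{\tau}^{L^\perp}$ is continuous (as already noted after its definition, $\tau^L$ being continuous as in Theorem~\ref{KL}, $\widehat{\tau}^{L^\perp}$ as in Corollary~\ref{TKL}, and the canonical embedding $Aut(K/L)\times Aut(\widehat{K}/L^\perp)\hookrightarrow Aut(K/L\times\widehat{K}/L^\perp)$ being continuous for the Braconnier topologies), so the semidirect product $G_{\tau^{\times,L}}=H\ltimes_{\tau^{\times,L}}(K/L\times\widehat{K}/L^\perp)$ is a locally compact group; moreover $\sigma:=\sigma_{K/L}\otimes\sigma_{\widehat{K}/L^\perp}$ is a Haar measure on the abelian factor $K/L\times\widehat{K}/L^\perp$.

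Next I would invoke the standard recipe for the Haar measure of a semidirect product, already used (implicitly) in Theorems~\ref{T} and \ref{KL} and Corollary~\ref{TKL}: if a locally compact group $H$ acts continuously on an LCA group $A$ with Haar measure $\nu_A$ via $\sigma\colon H\to Aut(A)$, and $\rho_\sigma\colon H\to(0,\infty)$ denotes the continuous homomorphism determined by $(\sigma_h)_*\nu_A=\rho_\sigma(h)\,\nu_A$, then $\rho_\sigma(h)\,dh\,d\nu_A(a)$ is a left Haar measure for $H\ltimes_\sigma A$. Comparing this with the formulas of Theorem~\ref{KL} and Corollary~\ref{TKL} reads off $\rho_{\tau^L}(h)=\delta_K(h)\delta_L(h^{-1})$ and $\rho_{\widehat{\tau}^{L^\perp}}(h)=\delta_K(h^{-1})\delta_L(h)$; equivalently, $\int_{K/L}f((k+L)^h)\,d\sigma_{K/L}(k+L)=\delta_K(h)\delta_L(h^{-1})\int_{K/L}f\,d\sigma_{K/L}$ and the analogous identity on $\widehat{K}/L^\perp$ with factor $\delta_K(h^{-1})\delta_L(h)$. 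Both of these can, if one prefers a self-contained argument, be re-derived on the spot from Weil's formula exactly as in the proof of Theorem~\ref{KL}.

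Finally, since $\tau^{\times,L}_h=\tau^L_h\times\widehat{\tau}^{L^\perp}_h$ acts diagonally, the pushforward of the product measure $\sigma$ under $\tau^{\times,L}_h$ is the product of the pushforwards, so $(\tau^{\times,L}_h)_*\sigma=(\tau^L_h)_*\sigma_{K/L}\otimes(\widehat{\tau}^{L^\perp}_h)_*\sigma_{\widehat{K}/L^\perp}=\rho_{\tau^L}(h)\,\rho_{\widehat{\tau}^{L^\perp}}(h)\,\sigma$, and $\rho_{\tau^L}(h)\,\rho_{\widehat{\tau}^{L^\perp}}(h)=\delta_K(h)\delta_L(h^{-1})\delta_K(h^{-1})\delta_L(h)=1$ because $\delta_K$ and $\delta_L$ are homomorphisms into $(0,\infty)$. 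Hence $\rho_{\tau^{\times,L}}\equiv 1$, i.e. $\tau^{\times,L}_h$ preserves $\sigma$ for every $h\in H$, and the recipe above yields $d\mu_{G_{\tau^{\times,L}}}(h,k+L,\omega L^\perp)=dh\,d\sigma_{K/L}(k+L)\,d\sigma_{\widehat{K}/L^\perp}(\omega L^\perp)$. The only genuinely delicate point is the bookkeeping of the preceding paragraph --- correctly extracting the action-modulus $\rho_\sigma$ from the semidirect-product Haar measures of Theorem~\ref{KL} and Corollary~\ref{TKL}, keeping the inversions in $h$ straight; everything else is formal, and the \emph{cancellation} $\delta_K(h)\delta_L(h^{-1})\cdot\delta_K(h^{-1})\delta_L(h)=1$ is precisely the reason the final Haar measure carries no modular factor.
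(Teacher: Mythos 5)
Your proposal is correct and follows essentially the same route as the paper: continuity of $\tau^{\times,L}$ gives local compactness, and the reciprocal scaling factors $\delta_K(h)\delta_L(h^{-1})$ and $\delta_K(h^{-1})\delta_L(h)$ extracted from Theorem~\ref{KL} and Corollary~\ref{TKL} cancel under the diagonal action, so the product measure $\sigma_{K/L}\otimes\sigma_{\widehat{K}/L^\perp}$ is invariant and $dh\,d\sigma_{K/L}\,d\sigma_{\widehat{K}/L^\perp}$ is the left Haar measure. Your phrasing via pushforwards and the general semidirect-product recipe is just a more explicit rendering of the paper's differential bookkeeping.
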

\begin{proof}
Continuity of the homomorphism $\tau^{\times,L}:H\to Aut(K/L\times\widehat{K}/L^\perp)$ guarantee that the semidirect product $G_{\tau^{\times,L}}=H\ltimes_{\tau^{\times,L}}\left(K/L\times\widehat{K}/L^\perp\right)$ is a locally compact group. Now, due to Theorem \ref{KL} and also Corollary \ref{TKL} for all $h\in H$ we have
\begin{align*}
d\sigma_{K/L}\times\sigma_{\widehat{K}/L^\perp}\left(\tau^{\times,L}_h(k+L,\omega L^\perp)\right)
&=d\sigma_{K/L}\times\sigma_{\widehat{K}/L^\perp}\left((k+L,\omega L^\perp)^h\right)
\\&=d\sigma_{K/L}\times\sigma_{\widehat{K}/L^\perp}(k^h+L,\omega_hL^\perp)
\\&=d\sigma_{K/L}(k^h+L)d\sigma_{\widehat{K}/L^\perp}(\omega_hL^\perp)
\\&=\delta_K(h)\delta_L(h^{-1})d\sigma_{K/L}(k+L)\delta_L(h)\delta_K(h^{-1})d\sigma_{\widehat{K}/L^\perp}(\omega L^\perp)
\\&=d\sigma_{K/L}(k+L)d\sigma_{\widehat{K}/L^\perp}(\omega L^\perp)
=d\sigma_{K/L}\times\sigma_{\widehat{K}/L^\perp}(k+L,\omega L^\perp).
\end{align*}
Hence, $d\mu_{G_{\tau^{\times,L}}}(h,k+L,\omega L^\perp)=dhd\sigma_{K/L}(k+L)d\sigma_{\widehat{K}/L^\perp}(\omega L^\perp)$ is the left Haar measure.
\end{proof}
Now let $L$ be a $\tau$-invariant uniform lattice in $K$. We define the $\tau$-Zak transform of $f\in L^2(G_\tau)$ via
\begin{equation}\label{ZAK}
\mathcal{Z}_Lf(h,k,\omega):=\delta_K(h)^{1/2}Zf_h(k^h,\omega_h)=\delta_K(h)^{1/2}\sum_{l\in L}f(h,k^h+l)\omega_h(l).
\end{equation}
When $f\in L^2(G_\tau)$ then for a.e. $h\in H$ we have $f_h\in L^2(K)$, so that $Z_Lf_h$ and hence $\mathcal{Z}_L$ is well-defined.
If $v\in L^2(K)$ and $u\in L^2(H)$ let $u\otimes v(h,k)=\delta_K(h)^{-1/2}u(h)v(k)$. Then, $u\otimes v\in L^2(G_\tau)$ and also we have
\begin{align*}
\mathcal{Z}_L(u\otimes v)(h,k,\omega)&=\delta_K(h)^{1/2}Z_Lf_h(k^h,\omega_h)
\\&=u(h)Z_Lv(k^h,\omega_h).
\end{align*}

The following proposition states some concrete properties of the $\tau$-Zak transform.
\begin{proposition}
{\it Let $f\in L^2(G_\tau)$. Then for a.e. $(h,k,\omega)\in G_{\tau^{\times,L}}$ and also $(l,\xi)\in L\times L^\perp$ we have
\begin{enumerate}
\item $\mathcal{Z}_Lf(h,k+l,\omega)=\overline{\omega_h(l)}\mathcal{Z}_Lf(h,k,\omega)$.
\item $\mathcal{Z}_Lf(h,k,\omega\xi)=\mathcal{Z}_Lf(h,k,\omega)$.
\end{enumerate}
}\end{proposition}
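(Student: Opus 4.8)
The plan is to reduce both identities to the classical quasi-periodicity of the Zak transform $Z_L$ on the LCA group $K$, working fibre by fibre in $h\in H$. Recall that $Z_L$ on $L^2(K)$ satisfies, for every $v\in L^2(K)$, a.e.\ $(k,\omega)\in K/L\times\widehat{K}/L^{\perp}$, and all $l\in L$, $\xi\in L^{\perp}$,
\[
Z_Lv(k+l,\omega)=\overline{\omega(l)}\,Z_Lv(k,\omega),\qquad Z_Lv(k,\omega\xi)=Z_Lv(k,\omega);
\]
for $v\in\mathcal{C}_c(K)$ these are immediate reindexings of the finite lattice sum, and they persist on $L^2(K)$ by continuity of $Z_L$. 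Since $f\in L^2(G_\tau)$ gives $f_h\in L^2(K)$ for a.e.\ $h$, the map $(k,\omega)\mapsto\mathcal{Z}_Lf(h,k,\omega)=\delta_K(h)^{1/2}Z_Lf_h(k^h,\omega_h)$ of (\ref{ZAK}) is, for a.e.\ $h$, a well-defined element of $L^2(K/L\times\widehat{K}/L^{\perp})$; properties (1) and (2) record how this function, read via (\ref{ZAK}) on $H\times K\times\widehat{K}$, transforms under $L\times L^{\perp}$ (so in particular it descends in the $\widehat{K}$-slot while remaining only quasi-periodic in the $K$-slot, exactly as in the scalar case). So I would fix such an $h$ and run the two computations in the $(k,\omega)$-variables.

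For (1), substitute $k+l$ for $k$ in (\ref{ZAK}). Since $\tau_h\in Aut(K)$ is additive, $\tau_h(k+l)=k^h+l^h$, and $\tau$-invariance of $L$ gives $l^h\in L$; hence the lattice sum $\sum_{l'\in L}f(h,k^h+l^h+l')\,\omega_h(l')$ is reindexed by the bijection $l'\mapsto l'+l^h$ of $L$, which factors out the unimodular scalar $\overline{\omega_h(l^h)}$. Finally $\omega_h(l^h)=(\omega\circ\tau_{h^{-1}})(\tau_h(l))=\omega\big((\tau_{h^{-1}}\circ\tau_h)(l)\big)=\omega(l)$ because $\tau:H\to Aut(K)$ is a homomorphism, and this is relation (1).

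For (2), substitute $\omega\xi$ for $\omega$ in (\ref{ZAK}). As $\widehat{\tau}_h$ is a group homomorphism of $\widehat{K}$, $(\omega\xi)_h=\omega_h\,\xi_h$, and $\xi_h\in L^{\perp}$: indeed for $l\in L$ one has $\xi_h(l)=\xi(\tau_{h^{-1}}(l))=1$ because $\tau_{h^{-1}}(l)\in L$ (equivalently, $L^{\perp}$ is $\widehat{\tau}$-invariant). Feeding $\xi_h\in L^{\perp}$ into the scalar frequency-periodicity $Z_Lf_h(k^h,\omega_h\xi_h)=Z_Lf_h(k^h,\omega_h)$ gives (2) at once.

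Both manipulations are short, so the only real care lies in the measure-theoretic bookkeeping and the $L^2$ passage. For each fixed pair $(l,\xi)$ the two relations hold off an $h$-null set and, for the surviving $h$, off a $(k,\omega)$-null set; since $K$ is second countable, $L$ is countable and $L^{\perp}=\widehat{K/L}$ is discrete, hence countable, so intersecting over all $(l,\xi)$ still leaves a conull set of $(h,k,\omega)$ --- which is the content of ``for a.e.\ $(h,k,\omega)\in G_{\tau^{\times,L}}$''. To legitimise the reindexing at the $L^2$ level it suffices to note that for a.e.\ $h$ the scalar relations hold on all of $L^2(K)$ (the standard $L^2$-extension of the $\mathcal{C}_c$-identities, by continuity of $Z_L$) and apply them to $f_h$; equivalently one first treats finite sums of tensors $u\otimes v$ with $u\in\mathcal{C}_c(H)$, $v\in\mathcal{C}_c(K)$, where $\mathcal{Z}_L(u\otimes v)(h,k,\omega)=u(h)Z_Lv(k^h,\omega_h)$ with $Z_Lv\in\mathcal{C}_c(K/L\times\widehat{K}/L^{\perp})$ so every step above is literally pointwise, and then passes to the $L^2$-closure. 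I do not anticipate any obstacle beyond this routine care.
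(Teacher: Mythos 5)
Your route is the paper's own: reduce fibrewise in $h$ to the quasi-periodicity of the classical Zak transform of $f_h\in L^2(K)$, and your treatment of (2) (using $(\omega\xi)_h=\omega_h\xi_h$ and $\xi_h\in L^\perp$, i.e.\ the $\widehat{\tau}$-invariance of $L^\perp$) together with the a.e./$L^2$ bookkeeping is correct and in fact more careful than the paper's one-line appeal to quasi-periodicity. The difficulty is your last step in (1). The computation itself is right: $(k+l)^h=k^h+l^h$ with $l^h\in L$, reindexing the lattice sum produces the factor $\overline{\omega_h(l^h)}$, and $\omega_h(l^h)=\omega(l)$. But the identity you have thereby proved is $\mathcal{Z}_Lf(h,k+l,\omega)=\overline{\omega(l)}\,\mathcal{Z}_Lf(h,k,\omega)$, whereas statement (1) asserts the factor $\overline{\omega_h(l)}=\overline{\omega(\tau_{h^{-1}}(l))}$. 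These coincide only when $\omega\overline{\omega_h}$ annihilates $l$, and in general they differ: for instance with $H=\mathrm{SL}(2,\mathbb{Z})$, $K=\mathbb{R}^2$, $L=\mathbb{Z}^2$, $\tau_h$ the shear $(x_1,x_2)\mapsto(x_1+x_2,x_2)$, $\omega(\mathbf{x})=e^{\pi i x_1}$ and $l=(0,1)$, one has $\omega(l)=1$ but $\omega_h(l)=\omega(-1,1)=-1$. So the sentence ``and this is relation (1)'' silently identifies two different quantities; as a proof of the proposition literally as printed, that is a genuine gap.

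For comparison, the paper's proof arrives at the printed factor $\overline{\omega_h(l)}$ by writing $\mathcal{Z}_Lf(h,k+l,\omega)=\delta_K(h)^{1/2}Z_Lf_h(k^h+l,\omega_h)$, i.e.\ by using $(k+l)^h=k^h+l$ — precisely the step your more careful computation avoids, and which is not valid since $\tau_h$ is additive. In other words, your derivation gives the correct form of the quasi-periodicity relation; the statement should carry the factor $\overline{\omega(l)}$ (equivalently $\overline{\omega_h(l^h)}$), or else one must replace $k+l$ by $k+\tau_{h^{-1}}(l)$ to retain the factor $\overline{\omega_h(l)}$. You should state this correction explicitly rather than conclude that your formula ``is relation (1)''; with that emendation (and your verification that $\tau_h(L)=L$ and $\widehat{\tau}_h(L^\perp)\subseteq L^\perp$, which the paper leaves implicit), your argument is complete, and part (2) stands as written.
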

\begin{proof}
(1) Due to the quasi-periodicity of the Zak transform $Z_L$ we get
\begin{align*}
\mathcal{Z}_Lf(h,k+l,\omega)&=\delta_K(h)^{1/2}Z_Lf_h(k^h+l,\omega_h)
\\&=\delta_K(h)^{1/2}\overline{\omega_h(l)}Z_Lf_h(k^h,\omega_h)
=\overline{\omega_h(l)}\mathcal{Z}_Lf(h,k,\omega).
\end{align*}
(2) It is also guaranteed by the quasi-periodicity of the Zak transform $Z_L$.
\end{proof}
In the following theorem we show that the $\tau$-Zak transform defined in (\ref{ZAK}) is an isometric transform.
\begin{theorem}\label{PL}
Let $H$ be a locally compact group and $K$ be an LCA group also $\tau:H\to Aut(K)$ be a continuous homomorphism
and let $L$ be a $\tau$-invariant uniform lattice in $K$. The $\tau$-Zak transform
$\mathcal{Z}:L^2(G_\tau)\to L^2(G_{\tau^{\times,L}})$ is an isometric transform.
\end{theorem}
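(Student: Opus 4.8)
The plan is to reduce the claimed $L^2$‑identity to a fibrewise statement over $H$ and then combine two facts already at our disposal: that for each $h\in H$ the automorphism $\tau^{\times,L}_h$ of $K/L\times\widehat{K}/L^\perp$ preserves the product Haar measure $\sigma_{K/L}\times\sigma_{\widehat{K}/L^\perp}$ — which is exactly the cancellation $\bigl(\delta_K(h)\delta_L(h^{-1})\bigr)\bigl(\delta_K(h^{-1})\delta_L(h)\bigr)=1$ of modular factors displayed inside the proof of Theorem \ref{ZG} — and the isometry of the classical Zak transform $Z_L\colon L^2(K)\to L^2(K/L\times\widehat{K}/L^\perp)$ recorded in Section 2.

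First I would compute, using the explicit left Haar measure of $G_{\tau^{\times,L}}$ from Theorem \ref{ZG} together with Tonelli,
\[
\|\mathcal{Z}_Lf\|_{L^2(G_{\tau^{\times,L}})}^2=\int_H\int_{K/L}\int_{\widehat{K}/L^\perp}|\mathcal{Z}_Lf(h,k,\omega)|^2\,d\sigma_{\widehat{K}/L^\perp}(\omega L^\perp)\,d\sigma_{K/L}(k+L)\,dh .
\]
By the defining formula (\ref{ZAK}) one has $|\mathcal{Z}_Lf(h,k,\omega)|^2=\delta_K(h)\,|Z_Lf_h((k+L)^h,(\omega L^\perp)^h)|^2$ for a.e.\ $h$ (those for which $f_h\in L^2(K)$), so the two inner integrals equal $\delta_K(h)$ times the squared $L^2$‑norm over $K/L\times\widehat{K}/L^\perp$ of $Z_Lf_h\circ\tau^{\times,L}_h$; here the quasi‑periodicity relations of the preceding Proposition are what make $|\mathcal{Z}_Lf|^2$ a genuine function on $G_{\tau^{\times,L}}$.

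Since $\tau^{\times,L}_h$ preserves $\sigma_{K/L}\times\sigma_{\widehat{K}/L^\perp}$, that inner expression is simply $\delta_K(h)\|Z_Lf_h\|_{L^2(K/L\times\widehat{K}/L^\perp)}^2$, and the classical Zak isometry gives $\|Z_Lf_h\|_{L^2(K/L\times\widehat{K}/L^\perp)}^2=\|f_h\|_{L^2(K)}^2$. Integrating over $H$ and recalling that $d\mu_{G_\tau}(h,k)=\delta_K(h)\,dh\,dk$, I would conclude
\[
\|\mathcal{Z}_Lf\|_{L^2(G_{\tau^{\times,L}})}^2=\int_H\delta_K(h)\|f_h\|_{L^2(K)}^2\,dh=\int_H\int_K\delta_K(h)|f(h,k)|^2\,dk\,dh=\|f\|_{L^2(G_\tau)}^2 ,
\]
which is the asserted isometry.

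I do not expect a genuine obstacle: the substantive work — constructing $G_{\widehat{\tau}}$, $G_{\tau^L}$, $G_{\widehat{\tau}^{L^\perp}}$, $G_{\tau^{\times,L}}$ and pinning down their Haar measures precisely enough that $\tau^{\times,L}_h$ turns out measure‑preserving — is already done in Theorems \ref{T}, \ref{KL} and \ref{ZG}. The only care needed is the book‑keeping with $\delta_K,\delta_L$ and the joint measurability of $(h,k+L,\omega L^\perp)\mapsto\mathcal{Z}_Lf(h,k,\omega)$ that legitimises Tonelli; the latter can be bypassed entirely by first proving the identity for $f$ in the dense subspace spanned by the elementary tensors $u\otimes v$ with $u\in\mathcal{C}_c(H)$ and $v\in\mathcal{C}_c(K)$, where $\mathcal{Z}_L(u\otimes v)(h,k,\omega)=u(h)\,Z_Lv(k^h,\omega_h)$ is a finite sum, and then extending by continuity.
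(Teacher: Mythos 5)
Your proposal is correct and follows essentially the same route as the paper's own proof: expand the norm with the Haar measure of $G_{\tau^{\times,L}}$ from Theorem \ref{ZG}, pull out the factor $\delta_K(h)$ from the definition (\ref{ZAK}), use the invariance of $\sigma_{K/L}\times\sigma_{\widehat{K}/L^\perp}$ under $\tau^{\times,L}_h$ (the cancellation of the $\delta_K,\delta_L$ factors), apply the classical Zak isometry fibrewise to get $\|f_h\|_{L^2(K)}^2$, and integrate against $\delta_K(h)\,dh$. Your added remark about verifying the identity first on elementary tensors to sidestep measurability issues is a harmless refinement the paper does not spell out.
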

\begin{proof}
Using Theorem \ref{ZG} and also Fubini's theorem we have
\begin{align*}
\|\mathcal{Z}f\|_{L^2(G_{\tau^{\times,L}})}^2
&=\int_{G_{\tau^{\times,L}}}|\mathcal{Z}f(h,k,\omega)|^2d\mu_{G_{\tau^{\times,L}}}(h,k+L,\omega L^\perp)
\\&=\int_H\int_{K/L}\int_{\widehat{K}/L^\perp}|\mathcal{Z}f(h,k,\omega)|^2dhd\sigma_{K/L}(k+L)d\sigma_{\widehat{K}/L^\perp}(\omega L^\perp)
\\&=\int_H\left(\int_{K/L}\int_{\widehat{K}/L^\perp}|Zf_h(k^h,\omega_h)|^2d\sigma_{K/L}(k+L)d\sigma_{\widehat{K}/L^\perp}(\omega L^\perp)\right)\delta_K(h)dh
\\&=\int_H\left(\int_{K/L}\int_{\widehat{K}/L^\perp}|Zf_h(k,\omega)|^2d\sigma_{K/L}(k^{h^{-1}}+L)d\sigma_{\widehat{K}/L^\perp}(\omega_{h^{-1}} L^\perp)\right)\delta_K(h)dh
\\&=\int_H\|f_h\|^2_{L^2(K)}\delta_K(h)dh=\|f\|_{L^2(G_\tau)}^2.
\end{align*}
\end{proof}
Using linearity of the $\tau$-Zak transform and also the polarization identity we achieve the following orthogonality relation for the
$\tau$-Zak transform.
\begin{corollary}
{\it Let $H$ be a locally compact group and $K$ be an LCA group also $\tau:H\to Aut(K)$ be a continuous homomorphism
and let $L$ be a $\tau$-invariant uniform lattice in $K$. The $\tau$-Zak transform, for all $f,g\in L^2(G_\tau)$ satisfies the following orthogonality relation;
\begin{equation}
\langle\mathcal{Z}f,\mathcal{Z}g\rangle_{L^2(G_{\tau^{\times,L}})}=\langle f,g\rangle_{L^2(G_\tau)}.
\end{equation}}
\end{corollary}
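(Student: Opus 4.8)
The plan is to deduce the orthogonality relation directly from the isometry established in Theorem \ref{PL} via a standard polarization argument. First I would record that the $\tau$-Zak transform $\mathcal{Z}=\mathcal{Z}_L$ is linear: this is immediate from the defining formula \eqref{ZAK}, since the scalar factor $\delta_K(h)^{1/2}$ and the characters $\omega_h(l)$ do not depend on $f$, while $f\mapsto f_h$ and the summation $\sum_{l\in L}$ are linear operations. Hence for all $f,g\in L^2(G_\tau)$ and every scalar $\lambda$ one has $\mathcal{Z}(f+\lambda g)=\mathcal{Z}f+\lambda\,\mathcal{Z}g$ in $L^2(G_{\tau^{\times,L}})$.

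Next I would invoke the complex polarization identity in the Hilbert space $L^2(G_{\tau^{\times,L}})$,
\begin{equation*}
\langle \mathcal{Z}f,\mathcal{Z}g\rangle_{L^2(G_{\tau^{\times,L}})}
=\frac{1}{4}\sum_{j=0}^{3} i^{\,j}\,\big\|\,\mathcal{Z}f+i^{\,j}\mathcal{Z}g\,\big\|_{L^2(G_{\tau^{\times,L}})}^2,
\end{equation*}
and rewrite the right-hand side using linearity as $\tfrac14\sum_{j=0}^{3} i^{\,j}\,\|\mathcal{Z}(f+i^{\,j}g)\|_{L^2(G_{\tau^{\times,L}})}^2$. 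Applying Theorem \ref{PL} to each function $f+i^{\,j}g\in L^2(G_\tau)$ gives $\|\mathcal{Z}(f+i^{\,j}g)\|_{L^2(G_{\tau^{\times,L}})}^2=\|f+i^{\,j}g\|_{L^2(G_\tau)}^2$. Substituting and then applying the polarization identity once more, this time in $L^2(G_\tau)$, collapses the sum to $\langle f,g\rangle_{L^2(G_\tau)}$, which is the claimed identity.

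There is essentially no obstacle here: every step is routine once Theorem \ref{PL} is available. The only point deserving a line of care is that $\mathcal{Z}$ must be a genuine linear map on all of $L^2(G_\tau)$, not merely on a dense subspace; but this is already implicit in Theorem \ref{PL}, whose proof computes $\|\mathcal{Z}f\|$ for arbitrary $f\in L^2(G_\tau)$ by Fubini together with the $L^2$-isometry of the classical Zak transform $Z_L$ on $L^2(K)$. Alternatively one could first verify the identity on the dense span of elementary tensors $u\otimes v$ (where $\mathcal{Z}(u\otimes v)(h,k,\omega)=u(h)Z_Lv(k^h,\omega_h)$) and pass to the limit by boundedness, but the polarization route above avoids even this.
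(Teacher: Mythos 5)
Your proposal is correct and follows exactly the route the paper takes: the paper derives the corollary from Theorem \ref{PL} by invoking linearity of $\mathcal{Z}_L$ together with the polarization identity, which is precisely your argument (spelled out in more detail). No gaps; nothing further is needed.
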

%

\section{\bf{Examples and Applications}}

In this section via some examples we illustrate how the preceding results apply for semidirect product groups.

\subsection{Euclidean group}
Let $H=\mathrm{SL}(2,\mathbb{Z})$ and $K=\mathbb{R}^2$ also for all $\sigma\in H$ let $\tau_\sigma:\mathbb{R}^2\to\mathbb{R}^2$ be given via $\tau_\sigma(\mathbf{x})=\sigma\mathbf{x}$, for all $\mathbf{x}\in\mathbb{R}^2$. Then, $G_\tau=\mathrm{SL}(2,\mathbb{Z})\ltimes_\tau\mathbb{R}^2$ is a locally compact group with left Haar measure $d\mu_{G_\tau}(\sigma,\mathbf{x})=d\sigma d\bf{x}$. Then, $G_{\widehat{\tau}}=\mathrm{SL}(2,\mathbb{Z})\ltimes_{\widehat{\tau}}\widehat{\mathbb{R}^2}=\mathrm{SL}(2,\mathbb{Z})\ltimes_{\widehat{\tau}}{\mathbb{R}}^2$ where $\widehat{\tau}:\mathrm{SL}(2,\mathbb{Z})\to Aut(\widehat{\mathbb{R}}^2)$ is given by $\sigma\mapsto \widehat{\tau}_\sigma$ via $\widehat{\tau}_\sigma({\bf w})={\bf w}_\sigma$ for all ${\bf w}\in\widehat{\mathbb{R}}^2$, where for all $\mathbf{x}\in\mathbb{R}^2$ we have
\begin{align*}
\langle{\mathbf{x}},\widehat{\tau}_\sigma({\bf{w}})\rangle
&=\langle{\bf{x}},{\bf{w}}_\sigma\rangle
\\&=\langle\tau_{\sigma^{-1}}({\bf{x}}),{\bf{w}}\rangle
=\langle\sigma^{-1}{\bf{x}},{\bf{w}}\rangle
=e^{-2\pi i (\sigma^{-1}{\bf{x}},{\bf{w}})}=e^{-2\pi i (\bf{x}.\bf{w}\sigma^{-1})}.
\end{align*}
If $\sigma=\left(
             \begin{array}{cc}
               a & b \\
               c & d \\
             \end{array}
           \right)
\in\mathrm{SL}(2,\mathbb{Z})$, then for all $\mathbf{w}=(w_1,w_2)\in\widehat{\mathbb{R}^2}$ and ${\bf x}=(x_1,x_2)\in\mathbb{R}^2$ we have
\begin{equation}
\langle{\bf{x}},{\bf{w}}_\sigma\rangle=e^{-2\pi i (\sigma^{-1}{\bf{x}},{\bf{w}})}
=e^{-2\pi i(dw_1x_1-bw_1x_2-cw_2x_1+aw_2x_2)}.
\end{equation}
\begin{example}
Let $\alpha,\beta\in\mathbb{R}$ and $L_{\alpha,\beta}=\alpha\mathbb{Z}\times\beta\mathbb{Z}$. Then, $L_{(\alpha,\beta)}$ is a 2D $\tau$-invariant uniform lattice in $\mathbb{R}^2$ with $L_{(\alpha,\beta)}^\perp=L_{(\alpha^{-1},\beta^{-1})}$. The continuous homomorphism $\tau^{\times,L_{(\alpha,\beta)}}:\mathrm{SL}(2,\mathbb{Z})\to Aut(\mathbb{R}^2/L_{\alpha,\beta}\times\widehat{\mathbb{R}}^2/L_{(\alpha^{-1},\beta^{-1})})=Aut(\mathbb{T}^4)$ is given by $\sigma\mapsto\tau^{\times,L_{(\alpha,\beta)}}_\sigma$ via
\begin{equation}
\tau^{\times,L_{(\alpha,\beta)}}_\sigma(\mathbf{x}+L_{(\alpha,\beta)},\mathbf{w}+L_{(\alpha,\beta)}^\perp)
=(\sigma\mathbf{x}+L_{(\alpha,\beta)},\mathbf{w}\sigma^{{-1}}+L_{(\alpha,\beta)}^\perp).
\end{equation}
If $f:\mathrm{SL}(2,\mathbb{Z})\times\mathbb{R}^2\to\mathbb{C}$ satisfies
\begin{equation}
\int_{\mathrm{SL}(2,\mathbb{Z})}\int_{\mathbb{R}}|f(\sigma,\mathbf{x})|^2d\sigma d\mathbf{x}<\infty,
\end{equation}
then for $(\sigma,\mathbf{x},\mathbf{w})\in \mathrm{SL}(2,\mathbb{Z})\times \mathbb{R}^2\times\mathbb{R}^2$ we have
\begin{align*}
\mathcal{Z}_{L_{(\alpha,\beta)}}f(\sigma,\mathbf{x},\mathbf{w})
&=Z_{L_{(\alpha,\beta)}}f_\sigma(\sigma\mathbf{x},\mathbf{w}\sigma^{-1})
\\&=\sum_{n=-\infty}^{\infty}\sum_{m=-\infty}^{\infty}f(\sigma,\sigma\mathbf{x}+(\alpha n,\beta m))\langle(\alpha^{-1} n,\beta^{-1} m),\mathbf{w}\sigma^{-1}\rangle.
\end{align*}
\end{example}

\subsection{Weyl-Heisenberg group}
Let $K$ be an LCA group with the Haar measure $dk$ and $\widehat{K}$ be the dual group of $K$ with the Haar measure $d\omega$
also $\mathbb{T}$ be the circle group and let the continuous homomorphism $\tau:K\to Aut(\widehat{K}\times\mathbb{T})$ via $s\mapsto \tau_s$ be given by $\tau_s(\omega,z)=(\omega,z.\omega(s))$. The semidirect product $G_\tau=K\ltimes_\tau(\widehat{K}\times\mathbb{T})$ is called the Weyl-Heisenberg group associated with $K$ which is also denoted by $\mathbb{H}(K)$. The group operation for $(k,\omega,z),(k',\omega',z')\in K\ltimes_\tau(\widehat{K}\times\mathbb{T})$ is
\begin{equation}
(k,\omega,z)\ltimes_\tau(k',\omega',z')=(k+k',\omega\omega',zz'\omega'(k)).
\end{equation}
If $dz$ is the Haar measure of the circle group, then $dkd\omega dz$ is a Haar measure for the Weyl-Heisenberg group and also the continuous homomorphism $\delta:K\to (0,\infty)$ given in (\ref{t}) is the constant function $1$. Thus, using Theorem 4.5 and also Proposition 4.6 of \cite{FollH} and Proposition \ref{T} we can obtain the continuous homomorphism $\widehat{\tau}:K\to Aut(K\times\mathbb{Z})$
via $s\mapsto\widehat{\tau}_s$, where $\widehat{\tau}_s$ is given by
$\widehat{\tau}_s(k,n)=(k,n)\circ \tau_{s^{-1}}$ for all $(k,n)\in K\times\mathbb{Z}$ and $s\in K$.
Due to Theorem 4.5 of \cite{FollH}, for each $(k,n)\in K\times \mathbb{Z}$ and also for all $(\omega,z)\in \widehat{K}\times\mathbb{T}$ we have
\begin{align*}
\langle(\omega,z),(k,n)_s\rangle&=\langle(\omega,z),\widehat{\tau}_s(k,n)\rangle
\\&=\langle\tau_{s^{-1}}(\omega,z),(k,n)\rangle
\\&=\langle(\omega,z\overline{\omega(s)}),(k,n)\rangle
\\&=\langle\omega,k\rangle\langle z\overline{\omega(s)},n\rangle
\\&=\omega(k)z^n\overline{\omega(s)}^n
\\&=\omega(k-ns)z^n
=\langle\omega,k-ns\rangle\langle z,n\rangle=\langle(\omega,z),(k-ns,n)\rangle.
\end{align*}
Thus, for all $k,s\in K$ and $n\in\mathbb{Z}$ we have
\begin{equation}\label{000}
\widehat{\tau}_s(k,n)=(k,n)_s=(k-ns,n).
\end{equation}
Therefore, $G_{\widehat{\tau}}$ has the underlying set $K\times K\times\mathbb{Z}$ with the following group operation;
\begin{align*}
(s,k,n)\ltimes_{\widehat{\tau}}(s',k',n')&=\left(s+s',(k,n)\widehat{\tau}_s(k',n')\right)
\\&=\left(s+s',(k,n)(k'-n's,n')\right)=(s+s',k+k'-n's,n+n').
\end{align*}
\begin{example}
Let $K=\mathbb{Z}$ then $G_{{\tau}}=\mathbb{Z}\ltimes_{{\tau}}\mathbb{T}^2$, where $\tau:\mathbb{Z}\to Aut(\mathbb{T}^2)$ is defined via $\ell\mapsto \tau_\ell$ given by $\tau_\ell(w,z)=(w,zw^\ell)$ for all $w,z\in\mathbb{T}$ and $\ell\in\mathbb{Z}$.
Due to (\ref{000}), $\widehat{\tau}:\mathbb{Z}\to Aut(\mathbb{Z}^2)$ is given by $\ell\mapsto \widehat{\tau}_{\ell}$ where $\widehat{\tau}_\ell(p,q)=(p-q\ell,q)$ for all $\ell,p,q\in\mathbb{Z}$.
Now for all $n\in\mathbb{Z}$ let $\mathbb{T}_n=\{z\in\mathbb{T}:z^n=1\}$ and also for $(n,m)\in\mathbb{Z}^2$ let  $L_{(n,m)}=\mathbb{T}_n\times\mathbb{T}_m$. Then,
\begin{align*}
L_{(n,m)}^\perp&=(\mathbb{T}_n\times\mathbb{T}_m)^\perp
\\&=\mathbb{T}_n^\perp\times\mathbb{T}_m^\perp=n\mathbb{Z}\times m\mathbb{Z}.
\end{align*}
If $n|m$ then $L_{(n,m)}$ is a $\tau$-invariant uniform lattice in $\mathbb{T}^2$. In this case
$\tau^{\times,L_{n,m}}:\mathbb{Z}\to Aut(\mathbb{T}^2/L_{(n,m)}\times\mathbb{Z}^2/L_{(n,m)}^\perp)$ for all $\ell\in\mathbb{Z}$ is given by
\begin{align*}
\tau^{\times,L_{n,m}}_\ell\left((w,z)+L_{(n,m)},(p,q)+L_{(n,m)}^\perp\right)
&=\left(\tau_\ell^{L_{(n,m)}}(w,z)+L_{(n,m)},\tau_\ell^{L_{(n,m)}^\perp}(p,q)+L_{(n,m)}^\perp\right)
\\&=\left((w,z)^\ell+L_{(n,m)},(p,q)_\ell+L_{(n,m)}^\perp\right)
\\&=\left((w,zw^\ell)+L_{(n,m)},(p-q\ell,q)+L_{(n,m)}^\perp\right).
\end{align*}
Since for all $n\in\mathbb{Z}$ we have $\mathbb{T}/\mathbb{T}_n=\mathbb{T}$ and also $\mathbb{Z}/n\mathbb{Z}=\mathbb{Z}_n$ we can consider
\begin{equation}
\tau^{\times,L_{n,m}}:\mathbb{Z}\to Aut(\mathbb{T}^2\times\mathbb{Z}_n\times\mathbb{Z}_m)
\end{equation}
via $\tau^{\times,L_{n,m}}_\ell(w,z,p,q)=\left(w,zw^\ell,p-q\ell,q\right)$ for all $(w,z,p,q)\in\mathbb{T}^2\times\mathbb{Z}_n\times\mathbb{Z}_m$.
If $f:\mathbb{Z}\times\mathbb{T}^2\to\mathbb{C}$ satisfies
\begin{equation}
\sum_{\ell=-\infty}^{\infty}\int_{0}^{2\pi}\int_{0}^{2\pi}|f(\ell,e^{i\theta},e^{it})|^2dtd\theta<\infty,
\end{equation}
then for $(\ell,e^{i\theta},e^{it},p,q)\in \mathbb{Z}\times\mathbb{T}^2\times\mathbb{Z}^2$ we have
\begin{align*}
\mathcal{Z}_{L_{(n,m)}}f(e^{i\theta},e^{it},p,q)
&=Z_{L_{(n,m)}}f_\ell\left(e^{i\theta},e^{it},p,q\right)
\\&=Z_{L_{(n,m)}}f_\ell\left((e^{i\theta},e^{i(t+\theta\ell)}),(p-q\ell,q)\right)
\\&=\sum_{(w,z)\in L_{(n,m)}}f(\ell,w+e^{i\theta},z+e^{i(t+\theta\ell)})(p-q\ell,q)(w,z)
\\&=\sum_{(w,z)\in L_{(n,m)}}f(\ell,w+e^{i\theta},z+e^{i(t+\theta\ell)})w^{p-q\ell}z^q
\\&=\sum_{k=1}^n\sum_{j=1}^mf(\ell,e^{i(2\pi k/n+\theta)},e^{i(2\pi j/m+t+\theta\ell)})e^{\frac{2\pi ik}{n}(p-q\ell)}e^{\frac{2\pi ij}{m}q}.
\end{align*}
According to Theorem \ref{PL} we get
\begin{align*}
&\sum_{\ell=-\infty}^{\infty}\int_0^{2\pi}\int_0^{2\pi}\sum_{p=0}^n\sum_{q=0}^m\left|\sum_{k=1}^n\sum_{j=1}^mf(\ell,e^{i(2\pi k/n+\theta)},e^{i(2\pi j/m+t+\theta\ell)})e^{\frac{2\pi ik}{n}(p-q\ell)}e^{\frac{2\pi ij}{m}q}\right|^2dtd\theta
\\&\ =\sum_{\ell=-\infty}^{\infty}\int_0^{2\pi}\int_0^{2\pi}\sum_{p=0}^n\sum_{q=0}^m\left|\mathcal{Z}_{L_{(n,m)}}f(\ell,e^{i\theta},e^{it},p,q)\right|^2dtd\theta
\\&\ =\sum_{\ell=-\infty}^{\infty}\int_{0}^{2\pi}\int_{0}^{2\pi}|f(\ell,e^{i\theta},e^{it})|^2dtd\theta.
\end{align*}
\end{example}

{\bf ACKNOWLEDGEMENTS.}
The authors would like to gratefully acknowledge financial support from the Numerical Harmonic Analysis Group (NuHAG) 
at the Faculty of Mathematics, University of Vienna.

\bibliographystyle{amsplain}

\end{document}